\documentclass{amsart}

\usepackage{amsthm, mathrsfs,amssymb,amsmath}
\usepackage{enumerate}

\makeatletter
\@namedef{subjclassname@2010}{%
\textup{2010} Mathematics Subject Classification}
\makeatother

\frenchspacing

\setlength{\textheight}{23cm}
\setlength{\textwidth}{16cm}
\setlength{\oddsidemargin}{0cm}
\setlength{\evensidemargin}{0cm}
\setlength{\topmargin}{0cm}


\allowdisplaybreaks

\newtheorem{thm}{Theorem}[section]

\newtheorem{cor}[thm]{Corollary}

\theoremstyle{definition}

\newtheorem{exmp}{Example}[section]

\theoremstyle{remark}

\newcommand{\spt}{\textnormal{spt}}

\title[ Topological transitive  sequence of cosine operators on Orlicz space]
{ Topological transitive sequence of cosine operators on Orlicz
space}
\author{\bf  I. Akbarbaglu, M. R. Azimi and V. Kumar}

\address{Ibrahim Akbarbaglu \endgraf Department of mathematics
\endgraf farhangian university
\endgraf tabriz, iran.}
\email{ibrahim.akbarbaglu@gmail.com}

\address {Mohammad Reza Azimi \endgraf Department of mathematics
\endgraf faculty of sciences
\endgraf university of maragheh
\endgraf 55181-83111, maragheh, iran.}
\email{mhr.azimi@maragheh.ac.ir}

\address{Vishvesh Kumar \endgraf Department of Mathematics
\endgraf Indian Institute of Technology Delhi \endgraf Delhi - 110 016, India.}
\email{vishveshmishra@gmail.com}

\begin{document}

\begin{abstract}
For a Young function $\phi$ and a locally compact second countable
group $G,$ let $L^\phi(G)$ denote the Orlicz space on $G.$ In this
article, we present
 a necessary and sufficient condition for the topological transitivity
  of a sequence of cosine operators
  $\{C_n\}_{n=1}^{\infty}:=\{\frac{1}{2}(T^n_{g,w}+S^n_{g,w})\}_{n=1}^{\infty}$,
   defined on $L^{\phi}(G)$. We investigate the
   conditions for a sequence of  cosine operators to be topological mixing. Moreover, we go
    on to prove the similar results for the direct sum of a sequence
     of the cosine operators. At the last, an example of topological
     transitive sequence of  cosine operators is given.

\end{abstract}

\keywords{ Hypercyclicity; Topological transitive; Topological mixing; Weighted translation
 operator; Orlicz space; Locally compact groups }
\subjclass[2010]{Primary 47A16, 46E30; Secondary 22D05}
\maketitle

\section{Introduction and preliminaries}
A sequence of bounded linear  operators $\{T_n\}_{n=1}^{\infty}$
acting on a Fr\'{e}chet space $X$ is said to be \emph{topological
transitive} if for each pair of no-empty open sets $(U, V)$ in $X$,
there exists an $n\in \mathbb{N}$ such that $T_n(U)\cap V\neq
\emptyset$. A single bounded linear operator $T$ is topological
transitive whenever the sequence of its iterates, that is
$\{T^n\}_{n=0}^{\infty}$ is topological transitive where $T^0$ is
the identity map.
 Furthermore, a sequence $\{T_n\}_{n=1}^{\infty}$
   is called \emph{hypercyclic} if there is a vector $x \in X$
whose orbit $\{T_n x : n=0, 1, 2,... \}$ is dense in $X$. Such a
vector is called a hypercyclic vector for that sequence.
Analogously, when the sequence $\{T^n\}_{n=0}^{\infty}$ is dense in
$X$, we say that an operator $T$ is hypercyclic.  It is worth noting
that these two notions, topological transitivity and hypercyclicity
in a single case are more likely equivalent on a Fr\'{e}chet space
$X$ \cite{bay,mang}. An operator $T$ is \emph{topologically mixing}
whenever for each pair of no-empty open sets $(U, V)$ in $X$, there
exists an $N\in \mathbb{N}$ such that $T^n(U)\cap V\neq \emptyset$
for all $n\geq N$. The operators of the form "identity plus a
backward shift" are the example of topologically mixing operators
which are also hypercyclic.
 An operator $T$  on a
Fr\'{e}chet space $X$ is \emph{weakly mixing } if and only if
$T\oplus T$  is hypercyclic on $X\oplus X$. Note that weakly mixing
maps are topologically transitive but in the topological setting,
the converse is not true. More detailed information concerning
dynamics of linear operators  may be found in the best interesting
books \cite{bay} and \cite{mang}.
\\  A continuous, even and  convex
function $\phi: \mathbb{R}\rightarrow \mathbb{R}^+\cup \{0\}$ is
called a\emph{ Young's function} whenever $\phi(0)=0$ and
$\lim_{t\rightarrow \infty} \phi(t)=\infty$.
  Corresponding for each Young's function $\phi$, there is  another Young's function
  $\psi: \mathbb{R}\rightarrow \mathbb{R}^+\cup\{0\}$ defined by
$\psi(y):=\sup \{x|y|-\phi(x): \quad x\geq 0 \}$,
  which is called \emph{complementary Young's function} of
   $\phi$.\\
 Let $G$
be a locally compact and second countable group with the identity
element $e$. Consider a right invariant Haar measure $\lambda$ on
$G$. Let $L^{\phi}(G)$ denote the set of all Borel measurable
functions $f$ on $G$ such that
$$\int_G |f(x)\nu(x)|\, d\lambda(x)<+\infty,$$ for each measurable
$\nu$ with $$\Psi(\nu):= \int_G \psi(\nu)d\lambda \leq 1.$$  The set
of all Borel functions $\nu$ on $G$ such that $\Psi(\nu) \leq 1$
will be denoted by $\Omega$.  Plainly $L^{\phi}(G)$ is a vector
space. Now we assume that the Young's function $\phi$ vanishes only
at zero. This guarantees that $L^{\phi}(G)$  equipped with the norm
$$\|f\|_\phi:= \sup_{\Psi(\nu)\leq 1} \int_G |f(x)\nu(x)|\, d\lambda(x),$$
 is a Banach space and called an \emph{Orlicz space}\cite{rao}.\\
Another equivalent norm on $L^{\phi}(G)$ is defined by
$$N_\phi(f)=\inf \{k>0: \int_G \phi(\frac{|f|}{k})d\mu\leq 1\},$$
which is so-called the \emph{Luxemburg} norm.

A Young's function $\phi$ is said to satisfy condition
\emph{$\Delta_2$-regular} if there is a constant $k>0$ such that
$\phi(2t)\leq k\phi(t)$ for large values of $t$ when
$\lambda(G)<\infty$. In case $\lambda(G)=\infty$, $\phi(2t)\leq
k\phi(t)$ for each $t>0$. Some examples of such Young's functions
may be found in \cite[Example 2.8]{az}. If $\phi$ is
$\Delta_2$-regular, then the space $C_c(G)$ of all continuous
functions on $G$ with compact support is dense in $L^{\phi}(G)$, and
the dual space $(L^{\phi}(G), \|\cdot\|_{\phi})$ is $(L^{\psi}(G),
N_{\psi}(\cdot))$. For further information the interested reader is
referred to \cite{rao}.

 It is well known that the hypercyclic
phenomenon is occurred  only on infinite-dimensional and separable
spaces \cite{bay, mang}. For this reason, we assume that $G$ is
second countable and $\phi(x)=0$ if and only if $x=0$ \cite[p. 87,
Theorem1]{rao}. Throughout this paper, the Banach space of all
essentially bounded and measurable functions on $G$ is denoted by
$L^\infty(G)$ and $N(f, r)$ denotes a neighborhood of $f\in
L^{\phi}(G)$ with radius $r>0$. A bounded continuous function $w:
G\rightarrow (0, \infty)$ is called a \emph{weight}. For $g\in G$,
let $\delta_g$ be the unit point mass at $g$. Given a weight $w$ on
$G$ and $g\in G$, a \emph{weighted translation} $T_{g,w}:
L^{\phi}(G)\rightarrow L^{\phi}(G)$ is defined by
$$T_{g,w}(f):= w\cdot f\ast \delta_g,  \quad    \quad f\in L^{\phi}(G)$$
where $f\ast \delta_g$ is the following convolution
$$f\ast \delta_g(t):=\int_G f(tx^{-1})d\delta_g(x)=f(tg^{-1}), \quad \quad  t\in G.$$
Indeed it is the right translation of $f$ by $g^{-1}$. Moreover, it
is easy to check that $f\ast \delta_g \in L^{\phi}(G)$ whenever
$f\in L^{\phi}(G)$. Recall that an element $g\in G$ is called a
\emph{torsion element} if it is of finite order. An element $g\in G$
is called \emph{periodic}  if the closed subgroup $G(g)$ generated
by $g$ is compact. Further, an element in $G$ is \emph{aperiodic} if
it is not periodic. Equivalently,  $g\in G$ is an aperiodic element,
if and only if for any compact subset $K\subset G$, there exists an
$N\in \mathbb{N}$ such that $K\cap Kg^{-n}=\emptyset$ for $n> N$
\cite[Lemma 2.1]{chen}. It is worth noting that a weighted
translation $T_{g,w}$ cannot be hypercyclic whenever
$\|w\|_{\infty}\leq 1$ or $g$ is a torsion element \cite{chen, az}.
 The hypercyclic weighted translation on locally
compact groups have been characterized by C. Chen  \cite{chen} in
details. In addition, he has studied the hypercyclicity of weighted
convolution operators in \cite{chen2}. Moreover, the hypercyclic
weighted translations on Orlicz spaces $L^{\phi}(G)$ has been
studied in \cite{az}.
 In the case $w^{-1}:=\frac{1}{w}\in L^{\infty}(G)$, the weighted
translation operator $T_{g,w}$ is invertible and its inverse is
$T_{g^{-1}, w^{-1}\ast \delta_g}$ which will be denoted by $S_{g,w}$
throughout this paper. For each $n\in \mathbb{Z}$, the \emph{cosine
operator} $C_n: L^{\phi}(G)\rightarrow L^{\phi}(G)$ is defined by
$$C_n:=\frac{1}{2}(T^n_{g,w}+S^n_{g,w}).$$
The study of cosine operator on Banach spaces is originally due to
the work done in \cite{bon} by Bonilla and Miana. They gave
sufficient conditions for the hypercyclicity and topological mixing
of a strongly continuous cosine operator function. Afterwards, T.
Kalmes in \cite{kal} characterized the hypercyclicity of cosine
operator functions on $L^p(\Omega)$ ($\Omega$ is open subset of
$\mathbb{R}^d$) generated by second order partial differential
operators. He also showed that the hypercyclicity and weakly mixing
of these type of operators are equivalent.

Furthermore, a necessary and sufficient condition for the
topological transitivity of the cosine operator $C_n$ on $L^p(G)$
has been already studied in \cite{chen3}. Nevertheless, in this
paper, we are going to generalize  that condition to Orlicz space
$L^{\phi}(G)$ on which the topological transitivity of the cosine
operator $C_n$ stays still in force.


\section{Main Result}
In this section we present our main results with some immediate consequences.
 We begin with the following theorem which give a necessary and sufficient condition
  on weight so that cosine operator is topological transitive.

\begin{thm}\label{T1} Let $g \in G$ be an aperiodic element of $G$ and let $\phi$
 be a $\Delta_2$-regular Young's function. Let $w, \,w^{-1} \in L^\infty(G).$
  If $C_n:= \frac{1}{2}(T^n_{g,w}+S^n_{g,w})$ is a cosine operator on
  $L^\phi(G)$,
  then the following statements are equivalent.
\begin{itemize}
\item[(i)] $(C_n)_{n \in \mathbb{N}_0}$ is topological transitive.
\item[(ii)] For each non-empty compact subset $K \subset G$ with $\lambda(K)>0,$ there exist
 sequences of Borel sets $(E_k),\, (E_k^+)$ and $(E_k^-)$ in $K,$ and a sequence
 $(n_k)$ of positive numbers such that for $E_k=E_k^+ \cup E_k^-,$ we have
$$\lim_{k \rightarrow \infty} \sup_{\nu \in \Omega} \int_{K \backslash E_k} |\nu(x)|\, d\lambda(x)=0.$$
 Moreover, the two sequence
$$\varphi_n= \prod_{j=1}^n w*\delta_{g^{-1}}^j\,\,\,\,\,\text{and}\,\,\,\, \tilde{\varphi_n}=
 \left( \prod_{j=1}^n w*\delta_g^j \right)^{-1}$$ satisfy
$$ \lim_{k \rightarrow \infty} \sup_{\nu \in \Omega} \int_{E_k}
 \varphi_{n_k}(x)|\nu(xg^{n_k})|\, d\lambda(x)=0, $$
$$ \lim_{k \rightarrow \infty} \sup_{\nu \in \Omega} \int_{E_k}
 \tilde{\varphi}_{n_k}(x)|\nu(xg^{n_k})|\, d\lambda(x)=0,$$
$$\lim_{k \rightarrow \infty} \sup_{\nu \in \Omega} \int_{E_k^+}
 \varphi_{2n_k}(x)|\nu(xg^{2n_k})|\, d\lambda(x)=0,$$
$$ \lim_{k \rightarrow \infty} \sup_{\nu \in \Omega} \int_{E_k^-}
 \tilde{\varphi}_{2n_k}(x)|\nu(xg^{2n_k})|\, d\lambda(x)=0.$$
\end{itemize}
\end{thm}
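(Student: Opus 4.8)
The plan is to prove the equivalence through a \emph{transitivity criterion} built on the density of $C_c(G)$ in $L^\phi(G)$ (valid since $\phi$ is $\Delta_2$-regular) together with the invertibility of $T_{g,w}$: because $w^{-1}\in L^\infty(G)$, the operator $S_{g,w}=T_{g,w}^{-1}$ satisfies $T_{g,w}^nS_{g,w}^n=S_{g,w}^nT_{g,w}^n=I$, which is what lets the two branches of $C_n=\tfrac12(T_{g,w}^n+S_{g,w}^n)$ be inverted separately. I would first record the explicit formulas $T_{g,w}^nf=\varphi_n\,(f*\delta_{g^n})$ and $S_{g,w}^nf=\tilde\varphi_n\,(f*\delta_{g^{-n}})$, and observe that, after the substitution $x\mapsto xg^{\pm n}$ and using right-invariance of $\lambda$, the Orlicz norm of such a product is exactly a supremum of the type appearing in (ii); here the crucial elementary fact is that $\Omega$ is invariant under right translations (since $\int_G\psi(\nu(\cdot a))\,d\lambda=\int_G\psi(\nu)\,d\lambda$), so the direction of the translation inside $\nu$ is immaterial to the suprema.

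For (ii) $\Rightarrow$ (i), I fix nonempty open $U,V$, choose $f\in U$ and $h\in V$ in $C_c(G)$, and let $K$ be a compact set containing their supports with $\lambda(K)>0$. Applying (ii) to $K$ produces $E_k=E_k^+\cup E_k^-$ and $(n_k)$, and the key device is to \emph{pre-restrict} to $E_k$: since $\sup_{\nu}\int_{K\setminus E_k}|\nu|\to0$, the truncation $f|_{E_k}$ converges to $f$ in $L^\phi(G)$, which sidesteps any blow-up of the weight products off $E_k$. I then set
\[
u=f|_{E_k}+2T_{g,w}^{n_k}(h|_{E_k^+})+2S_{g,w}^{n_k}(h|_{E_k^-}).
\]
Conditions one and two give $\|C_{n_k}(f|_{E_k})\|_\phi\to0$ and $\|2T_{g,w}^{n_k}(h|_{E_k^+})+2S_{g,w}^{n_k}(h|_{E_k^-})\|_\phi\to0$, so $u\to f$; while the identity $T_{g,w}^nS_{g,w}^n=S_{g,w}^nT_{g,w}^n=I$ yields
\[
C_{n_k}u=C_{n_k}(f|_{E_k})+\big(h|_{E_k^+}+h|_{E_k^-}\big)+T_{g,w}^{2n_k}(h|_{E_k^+})+S_{g,w}^{2n_k}(h|_{E_k^-}),
\]
whose last two \emph{cross terms} are exactly controlled by the $2n_k$-conditions three and four, and whose main term $h|_{E_k}$ approximates $h$ again by $\sup_\nu\int_{K\setminus E_k}|\nu|\to0$. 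Aperiodicity of $g$ guarantees that for large $k$ the supports $E_k^\pm g^{\pm n_k}$ of the correction lie outside $K$, so $u$ and $f$ agree on $K$ up to the negligible truncation; hence for large $k$ one has $u\in U$ and $C_{n_k}u\in V$, proving transitivity.

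For the converse (i) $\Rightarrow$ (ii), I fix a compact $K$ with $\lambda(K)>0$ and apply topological transitivity to shrinking neighborhoods of a normalized test function supported on $K$ (e.g.\ $\chi_K/\|\chi_K\|_\phi$), obtaining for each $k$ an index $n_k$ and a vector $u_k$ with $u_k$ close to the source and $C_{n_k}u_k$ close to the target. The extraction of the four limits then rests on the disjointness, guaranteed by aperiodicity, of the five regions $K,\ Kg^{\pm n_k},\ Kg^{\pm 2n_k}$ for $k$ large: this decouples $C_{n_k}u_k=\tfrac12(T_{g,w}^{n_k}u_k+S_{g,w}^{n_k}u_k)$ so that the forward and backward parts, and their squares, may be isolated. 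I would define $E_k$ as the subset of $K$ on which the reconstruction error is small in the $\nu$-sup sense, bound $\sup_\nu\int_{K\setminus E_k}|\nu|$ by a Chebyshev-type estimate, and split $E_k=E_k^+\cup E_k^-$ according to which branch ($T_{g,w}^{n_k}$ or $S_{g,w}^{n_k}$) carries the mass realizing $h$ at each point; reading the isolated contributions off against the support-disjoint pieces then yields the four displayed limits.

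I expect the converse direction to be the main obstacle. The difficulty is twofold: first, translating a single transitivity statement into four separate quantitative limits requires carefully engineering the test neighborhoods and then decoupling the contributions using aperiodicity, with the doubling to $2n_k$ arising unavoidably from the cross terms in $C_{n_k}^2$; second, all estimates must be carried out through the supremum over $\Omega$ defining the Orlicz norm, rather than through the self-dual $L^p$ pairing, so the selection of $E_k$ and the Chebyshev argument must be phrased uniformly in $\nu$. The $\Delta_2$-regularity (ensuring $C_c(G)$ is dense and the duality $(L^\phi)'=L^\psi$) and the translation-invariance of $\Omega$ are the structural inputs that keep these Orlicz-space manipulations parallel to the classical $L^p$ argument of \cite{chen3}.
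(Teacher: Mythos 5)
Your (ii)\,$\Rightarrow$\,(i) direction is essentially the paper's proof: the same corrector
$v_k=f\chi_{E_k}+2T_{g,w}^{n_k}(h\chi_{E_k^+})+2S_{g,w}^{n_k}(h\chi_{E_k^-})$, the same expansion of $C_{n_k}v_k$ using $T_{g,w}^{n}S_{g,w}^{n}=I$, the same use of $\Delta_2$-regularity to pick $f,h\in C_c(G)$, and the same change-of-variables/right-invariance computation keeping the supremum over $\Omega$. That half is fine.

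The converse (i)\,$\Rightarrow$\,(ii) as you sketch it has a genuine gap, and it is exactly where you predicted trouble. The paper's argument hinges on two devices you do not have. First, the \emph{sign flip}: transitivity is tested on the antipodal pair $U=N(\chi_K,\epsilon^2)$, $V=N(-\chi_K,\epsilon^2)$, and the approximant $f$ is reduced to a real \emph{positive} function (legitimate because $\mathrm{Re}$ and $f\mapsto f^+$ commute with $T_{g,w}$ and $S_{g,w}$). Then, since $T_{g,w}^m f^+$ and $S_{g,w}^m f^+$ are both nonnegative and $T_{g,w}^m f^++S_{g,w}^m f^+=2C_mf^+$, the smallness of $\|(C_mf)^+\|_\phi$ --- which is forced precisely because the target is $-\chi_K$, so $C_mf\approx -1$ on $E_m$ --- controls \emph{each} branch separately; this positivity argument is what yields the first two limits. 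Your proposed substitute, decoupling via disjointness of $K,\ Kg^{\pm n_k},\ Kg^{\pm 2n_k}$, cannot do this job: the vector $u_k$ produced by transitivity is completely unconstrained off $K$, and $\tfrac12(T_{g,w}^{n_k}u_k+S_{g,w}^{n_k}u_k)\approx h$ on $K$ involves the values of $u_k$ on the two disjoint translates $Kg^{-n_k}$ and $Kg^{n_k}$; disjointness of these sets gives no mechanism to exclude cancellation between the two branches, so neither branch can be isolated. With a generic positive target (your $\chi_K/\|\chi_K\|_\phi$) the sum of two positive terms being $\approx 1$ tells you nothing about either term individually.

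Second, the doubling to $2n_k$ does not come from ``cross terms in $C_{n_k}^2$'': transitivity hands you control at the \emph{single} index $n_k$, and the d'Alembert identity $C_{2n}=2C_n^2-I$ cannot be fed back into it. In the paper the split is pointwise --- $E_m^-=\{x\in E_m: T_{g,w}^mf(x)<\epsilon-1\}$, $E_m^+=E_m\setminus E_m^-$, so that on $E_m^+$ the inequality $C_mf<\epsilon-1$ forces $S_{g,w}^mf<\epsilon-1$ --- and the $2n_k$-limits are then extracted by the purely algebraic identities $T_{g,w}^{2m}S_{g,w}^m=T_{g,w}^m$ (resp.\ $S_{g,w}^{2m}T_{g,w}^m=S_{g,w}^m$), a bound of $|T_{g,w}^mf^-|$ by $2|C_mf^-|$, and the vanishing of $\chi_{K\cap E_m^+g^{2m}}$ from aperiodicity. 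Your ``split according to which branch carries the mass'' is the right germ of $E_k^\pm$, but without the negative target there is no forced negativity to split on, and without the positive-part reduction the branch estimates never get off the ground. Your Chebyshev-type selection of $E_k$ (the paper's sets $A$ and $B_m$) is correct as far as it goes; the missing ingredients are the antipodal test pair and the positivity decoupling.
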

\begin{proof}
(i) $\Rightarrow$ (ii). In spite of being different underlying
spaces, the approach of the proof is followed like as done in
\cite{chen3}. Let $K$ be a compact subset of $G$ such that
$\lambda(K)>0.$ Since $g \in G$
 is an aperiodic element, there exists $N \in \mathbb{N}$ such that
  $K \cap K g^{\pm n} = \emptyset$ for $n > N$ \cite[Lemma 2.1]{chen}.
  Denote the characteristic function of $K$ defined on $G$ by $\chi_K.$
  Clearly $\chi_K\in L^\phi(G)$. Take $U=N(\chi_K, \epsilon^2) $ and $V=N(-\chi_K,
  \epsilon^2)$ in the definition of the topological transitive for
  the sequence $(C_n)_{n\in \mathbb{N}_0}$.
  Then for each $\epsilon \in (0,1),$ there exist $f \in L^\phi(G)$ and $m \in N$,  such that
$$\| f- \chi_K\|_\phi < \epsilon^2\,\,\,\,\,\text{and}\,\,\,\, \|C_mf+\chi_K\|_\phi< \epsilon^2.$$
Hence, we can write that $$\| Re(f)- \chi_K\|_\phi <
\epsilon^2\,\,\,\,\,\text{and}\,\,\,\,
\|Re(C_mf)+\chi_K\|_\phi=\|C_m Re(f)+\chi_K\|_\phi< \epsilon^2,$$
where $Re(f)$ is the real part of the complex valued function $f$.
 Since the maps $Re: L^\phi(G, \mathbb{C}) \rightarrow
 L^\phi(G, \mathbb{R})$ and $f \mapsto f^+:=\text{max}\{0,f\}$ from $L^\phi(G, \mathbb{R})$
   to $L^\phi(G, \mathbb{R}) $ are continuous and also commute with both $T_{g,w}$ and
   $S_{g,w}$, hence without loss of generality  we may assume that the
   function $f$ is positive.

Therefore, for any Borel subset $F \subset G,$ we have
\begin{eqnarray} \label{1}
\|C_m f^+\chi_F\|_\phi &\leq& \|(C_m f)^+\|_\phi = \|(C_m
f+\chi_K-\chi_K)^+\|_\phi \nonumber
 \\&\leq & \|(C_mf+\chi_K)^+\|_\phi+\|(-\chi_K)^+\|_\phi\nonumber \\
  &=& \|(C_mf+\chi_K)^+\|_\phi \leq \|C_mf+\chi_K\|_\phi < \epsilon^2.
\end{eqnarray}
Set $A=\{x \in K:|f(x)-1|> \epsilon\}.$  Then
\begin{eqnarray*}
\epsilon^2 > \|f-\chi_K\|_\phi &=& \sup_{\nu \in \Omega} \int_G |f(x)-\chi_K(x)| |\nu(x)|\, d\lambda(x) \\
&\geq & \sup_{\nu \in \Omega} \int_K |f(x)-1| |\nu(x)|\, d\lambda(x) \\
&>& \sup_{\nu \in \Omega} \int_A \epsilon |\nu(x)|\,d\lambda(x).
\end{eqnarray*}
Therefore, we have  $$\sup_{\nu \in \Omega} \int_A |\nu(x)| \,
d\lambda(x)< \epsilon .$$ Set $B_m=\{x \in K: |C_mf(x)+1| \geq
\epsilon\}.$ Then, by the similar argument,  we get $$\sup_{\nu \in
\Omega} \int_{B_m} |\nu(x)|\, d \lambda(x)<\epsilon.$$ Now, let
$E_m:=\{x \in K : |f(x)-1|<\epsilon\} \cap \{x\in K:
|C_mf(x)+1|<\epsilon\}.$ Then, for $x \in E_m,$ we get
$f(x)>1-\epsilon>0$ and $C_m f(x)< \epsilon-1<0.$ Also,
\begin{eqnarray*}
\sup_{\nu \in \Omega} \int_{K\setminus E_m} |\nu(x)|\, d\lambda(x)
& =& \sup_{\nu \in \Omega} \int_{A \cup B_m} |\nu(x)|\, d\lambda(x) \\
&=& \sup_{\nu \in \Omega} \int_{A} |\nu(x)|\, d\lambda(x)+ \sup_{\nu
\in \Omega} \int_{B_m} |\nu(x)|\, d\lambda(x)\\
&<&\epsilon+\epsilon=2\epsilon.
\end{eqnarray*}
By keeping the facts that Haar measure $\lambda$ is right
invariant, $T_{g,w}^mf^+$ and $S_{g,w}^mf^+$ are positive in the
mind, with the aid of \eqref{1} we get that
 \begin{eqnarray*}
2\epsilon^2 &>&\|2(C_mf^+)\chi_{E_m}\|_\phi =
\|(T_{g,w}^mf^++S_{g,w}^mf^+)\chi_{E_m}\|_\phi \geq \|T_{g,w}^mf^+\chi_{E_m}\|_\phi \\
 &=& \sup_{\nu \in \Omega} \int_{E_m} |T_{g,w}^mf^+(x)| |v(x)| \, d\lambda(x) \\
  &=& \sup_{\nu \in \Omega} \int_{E_m} |w(x)w(xg^{-1}) \ldots w(xg^{-m+1})f^+(xg^{-m})|
   |v(x)| \, d\lambda(x) \\ &=& \sup_{\nu \in \Omega} \int_{E_m} w(xg^m)w(xg^{m-1})
    \ldots w(x g)f^+(x) |v(xg^m)| \, d\lambda(x)\\
    &=& \sup_{\nu \in \Omega} \int_{E_m} \varphi_m(x) f^+(x) |\nu(xg^m)|\, d\lambda(x) \\
    &>& \sup_{\nu \in \Omega} \int_{E_m} (1-\epsilon)\varphi_m(x) |\nu(xg^m)|\,d\lambda(x).
\end{eqnarray*}
Therefore,
$$\sup_{\nu \in \Omega} \int_{E_m} \varphi_m(x) |\nu (xg^m)|\,d\lambda(x) < \frac{2\epsilon^2}{1-\epsilon}.$$
By the similar argument, we get
$$2 \epsilon^2 >\|(S_{g,w}^mf^+)\chi_{E_mg^m}\|_\phi> (1-\epsilon) \sup_{\nu \in \Omega} \int_{E_m} \tilde{\varphi_m}(x)|v(xg^m)|\,d\lambda(x)$$ and thus
$$\sup_{nu \in \Omega} \int_{E_m} \tilde{\varphi}_m(x) |\nu (xg^m)|\,d\lambda(x) < \frac{2\epsilon^2}{1-\epsilon}.$$ Hence,
the first part of Condition (ii) holds as $\epsilon$ is arbitrary.

Now, let $E_m^-= \{x\in E_m: T_{g,w}^mf(x)< \epsilon-1\}$ and
$E_m^+= E_m \setminus E_m^-.$ Then, for $x \in E^+_m,$ we have
$$\epsilon-1>C_mf(x)= \frac{1}{2}T_{g,w}^mf(x)+\frac{1}{2} S_{g,w}^mf(x)
 \geq \frac{1}{2}(\epsilon-1)+\frac{1}{2}S_{g,w}^mf(x)$$ and therefore
  $$S_{g,w}^mf(x)<\epsilon-1,\,\,\,\,\,\,x \in E_m^+.$$
Now, consider the following
\begin{eqnarray*}
&&(1-\epsilon) \sup_{\nu \in \Omega} \int_{E_m^+} \varphi_{2m}(x)
|\nu(x g^{2m})|\, d\lambda(x)\\
&=&\sup_{\nu \in \Omega} \int_{E_m^+} |w(xg^{2m})
w(xg^{2m-1})w(xg^{2m-2}) \ldots w(x g)|\, |S_{g,w}^mf^-(x)|\,
|\nu(xg^{2m})|\, d\lambda(x)\\
&\leq&  \sup_{\nu \in \Omega} \int_{E_m^+g^{2m}} |w(x) w(x g)w(xg^2)
\ldots w(xg^{-(2m-1)})| \,|S_{g,w}^mf^-(xg^{-2m})|\, |\nu(x)|\,
d\lambda(x)\\
&=& \sup_{\nu \in \Omega} \int_{E_m^+g^{2m}} |T_{g,w}^{2m}
S_{g,w}^mf^-(x)|\,|\nu(x)|\,d\lambda(x)\\
&=& \sup_{\nu \in \Omega}
\int_{E_m^+g^{2m}} |T_{g,w}^{m}f^-(x)|\,|\nu(x)|\,d\lambda(x)\\
&\leq& 2 \sup_{\nu \in \Omega} \int_{E_m^+ g^{2m}}
|C_mf^-(x)|\,|\nu(x)|\, d\lambda(x)\\
&=& 2 \sup_{\nu \in \Omega}
\int_{G} |C_mf^-(x)\chi_{E_m^+g^{2m}}|\,|\nu(x)|\, d\lambda(x)\\
&=&2 \sup_{\nu \in \Omega} \int_{G}
|C_m(f^+-f)(x)\chi_{E_m^+g^{2m}}|\,|\nu(x)|\, d\lambda(x)\\
&=&2 \sup_{\nu \in \Omega} \int_{G}
|(C_mf^+)\chi_{E_m^+g^{2m}}-(C_mf+\chi_K) \chi_{E_m^+
g^{2m}}+\chi_{K \cap E_m^+g^2m}|\, |\nu(x)|\, d\lambda(x)\\
&\leq& 2\sup_{\nu \in \Omega} \int_{G} |(C_mf^+)\chi_{E_m^+g^{2m}}|
|\nu(x)|\,d\lambda(x)\\
&+& 2\sup_{\nu \in \Omega} \int_{G} |(C_mf+\chi_K) \chi_{E_m^+
g^{2m}}|\,|\nu(x)|\, d\lambda(x)+ 2\sup_{\nu \in \Omega} \int_{K
\cap E_m^+g^{2m}} |\nu(x)|\,d\lambda(x)\\
&\leq&  2 \|(C_mf^+ \chi_{E_m^+g^{2m}})\|_\phi+2
\|(C_mf+\chi_K)\|_\phi+2 \|\chi_{K \cap E_m^+g^{2m}}\|_\phi\\
&<& 2 \epsilon^2+2\epsilon^2+0=4\epsilon^2.
\end{eqnarray*}
In the last inequality, from the fact $K\cap Kg^{\pm 2m}=\emptyset$,
has been already used. Therefore, we get $$ \sup_{\nu \in \Omega}
\int_{E_m^+} \varphi_{2m}(x) |\nu(x g^{2m})|\, d\lambda(x)
 < \frac{4\epsilon^2}{(1-\epsilon)}.$$ In similar lines, we also have
$$\sup_{\nu \in \Omega} \int_{E_m^-} \tilde{\varphi}_{2m}(x) |\nu(x g^{2m})|\, d\lambda(x)
 < \frac{4\epsilon^2}{(1-\epsilon)}.$$
Since $\epsilon$ is arbitrary, last two condition of (ii) part also fulfilled.

(ii) $\Rightarrow$ (i). Let $U$ and $V$ be two non-empty open subsets of $L^\phi(G).$
 Since $\phi$ is $\Delta_2$-regular we can choose two non-zero functions $f$ and $h$ in
 $C_c(G)$ such that $f \in U$ and $h \in V.$ Set $K=supp(f) \cup
 supp(h)$, the supports of $f$ and $h$ respectively.
  Let $E_k \subset K$ and it satisfies condition (ii).
  But $g \in G$ is an aperiodic element, hence there exists
   $M \in \mathbb{N}$ such that $K \cap K g^{\pm n} = \emptyset$ for all $n >M.$
  Subsequently, for a given $\epsilon>0$, one can find $N \in \mathbb{N}$
   such that for each $k>N$,  $n_k>M$ and
$$\|h\|_\infty \cdot \sup_{\nu \in \Omega} \int_{E_k}
 \varphi_{n_k}(x)|\nu(xg^{n_k})|\, d\lambda(x)< \epsilon,
  \,\,\,\,\,\,\,\|h\|_\infty \sup_{nu \in \Omega} \int_{K \backslash E_k} |\nu(x)|
  d \lambda(x)< \epsilon.$$
Now, we have
\begin{eqnarray*}
\|T_{g,w}^{n_k}(h \chi_{E_k})\|_\phi & =& \sup_{\nu \in \Omega} \int_G |T_{g,w}^{n_k}
(h \chi_{E_k})(x)\nu(x)|\, d\lambda(x) \\
 &=&  \sup_{\nu \in \Omega} \int_G |w(x)w(xg^{-1})
\ldots w(xg^{-n_k+1})h(xg^{-n_k}) \chi_{E_k}(xg^{-n_k})\nu(x)| \, d\lambda(x) \\
&=&\sup_{\nu \in \Omega} \int_G  |w(xg^{n_k})w(xg^{n_k-1}) \ldots
w(x
g)h(x)\chi_{E_k}(x) \nu(x)| \, d\lambda(x) \\
&\leq & \|h\|_\infty\cdot \sup_{\nu \in \Omega} \int_{E_k}
\varphi_{n_k}(x) |\nu(xg^{n_k})|\,d\lambda(x)<\epsilon.
\end{eqnarray*}
Hence, $$\lim_{k \rightarrow \infty} \|T_{g,w}^{n_k}(h
\chi_{E_k})\|_\phi =0.$$ Also,
\begin{eqnarray*}
\|h-h\chi_{E_k}\|_\phi &=& \sup_{\nu \in \Omega} \int_G |h(x)-h(x)\chi_{E_k}(x)||\nu(x)|\,d\lambda(x)\\
 &=& \sup_{\nu \in \Omega} \int_G |h(x)\chi_{K \backslash E_k}(x)||\nu(x)|\, d\lambda(x)\\
 &=& \int_{K \backslash E_k} |h(X)| |\nu(x)|\,d\lambda(x)\\
&\leq & \|h\|_\infty \cdot \int_{K \backslash E_k} |\nu(x)|\,
d\lambda(x) < \epsilon.
\end{eqnarray*}
By similar argument, using the conditions given in (ii) we get
$$\lim_{k \rightarrow \infty} \|S_{g,w}^{n_k}(h \chi_{E_k})\|_\phi  =0,\,\,
\lim_{k \rightarrow \infty} \|S_{g,w}^{2n_k}(h \chi_{E_k^-})\|_\phi=
 \lim_{k \rightarrow \infty}\|T_{g,w}^{2n_k}(h\chi_{E_k^+})\|_\phi=0.$$
  In addition, we also have $$\lim_{k \rightarrow \infty}\|S_{g,w}^{n_k}(f \chi_{E_k})\|_\phi=
   \lim_{k \rightarrow \infty} \|T_{g,w}^{n_k}(f\chi_{E_k})\|_\phi=0.$$
For each $k \in \mathbb{N},$ we set
$$v_k=f \chi_{E_k}+2T_{g,w}^{n_k}(h\chi_{E_k^+})+2S_{g,w}^{n_k}(h\chi_{E_k^-}).$$
In this stage, an application of the frequently used fact i.e., $K
\cap K g^{(m_1-m_2)n_k}= \emptyset$ ($m_1,m_2 \in \mathbb{Z}$ and
$m_1 \neq m_2$) with Minkowski inequality yield that
$$ \|v_k-f\|_\phi \leq  \|f-f\chi_{E_k}\|_\phi+
2 \|T_{g,w}^{n_k}(h\chi_{E_k^+})\|_\phi+ 2\|S_{g,w}^{n_k}(h
\chi_{E_k^-})\|_\phi $$ and
$$\|C_{n_k}v_k-h\|_\phi \leq \|h-h\chi_{E_k}\|_\phi+
 \frac{1}{2} \|T_{g,w}^{n_k}(f\chi_{E_k})\|_\phi+\frac{1}{2} \|S_{g,w}^{n_k}(f\chi_{E_k})\|_\phi+
  \|T_{g,w}^{2n_k}(h \chi_{E_k^+})\|_\phi+ \|S_{g,w}^{n_k}(h\chi_{E_k^-})\|_\phi.$$
Therefore,  we have  $\lim_{k \rightarrow \infty}\|v_k-f\|_\phi =0 $ and
$\lim_{k \rightarrow \infty} \|C_{n_k}v_k-h\|_\phi =0 $ which gives that
$\lim_{k \rightarrow \infty} v_k=f$ and $\lim_{k \rightarrow \infty} C_{n_k}v_{n_k}=h.$
 So, $C_{n_k}(U) \cap V \neq \emptyset$ for some $k.$ Hence, the sequence  $(C_n)_{n \in \mathbb{N}_0}$
  is topological transitive.
   \end{proof}
The following corollary gives a characterization of topological
mixing property of  cosine operators on Orlicz space $L^\phi(G).$
Since the proof is similar to above theorem therefore we will omit
the proof.
 \begin{cor} Let $g \in G$ be an aperiodic element of $G$ and let $\phi$
 be a $\Delta_2$-regular Young function. Let $w, \,w^{-1} \in L^\infty(G).$
  If $C_n:= \frac{1}{2}(T^n_{g,w}+S^n_{g,w})$ is cosine operator on $L^\phi(G)$
   then the following statements are equivalent.
    \begin{itemize}
        \item[(i)] $(C_n)_{n \in \mathbb{N}_0}$ is topological mixing.
        \item[(ii)] For each non-empty compact subset $K \subset G$ with
         $\lambda(K)>0,$ there exist sequences of Borel sets $(E_n)$ such that
        $$\lim_{n \rightarrow \infty} \sup_{\nu \in \Omega} \int_{K \backslash E_n} |\nu(x)|\,
        d\lambda(x)=0$$ and the two sequence
        $$\varphi_n= \prod_{j=1}^n w*\delta_{g^{-1}}^j\,\,\,\,\,\text{and}\,\,\,\,
        \tilde{\varphi_n}= \left( \prod_{j=1}^n w*\delta_g^j \right)^{-1}$$ satisfy
        $$ \lim_{n \rightarrow \infty} \sup_{\nu \in \Omega} \int_{E_n} \varphi_{n}(x)|\nu(xg^{n})|\,
         d\lambda(x)=0, $$
        $$ \lim_{n \rightarrow \infty} \sup_{\nu \in \Omega} \int_{E_n}
        \tilde{\varphi}_{n}(x)|\nu(xg^{n})|\, d\lambda(x)=0.$$
         \end{itemize}
    \end{cor}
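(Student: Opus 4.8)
The plan is to reprove the equivalence along the lines of Theorem~\ref{T1}, the only conceptual change being that mixing of the sequence $(C_n)$ demands the relevant estimates to hold for \emph{all} large $n$ rather than merely along a subsequence $(n_k)$. I read mixing of $(C_n)$ as: for every pair $(U,V)$ of non-empty open sets there is an $N$ with $C_n(U)\cap V\neq\emptyset$ for all $n\geq N$. This ``full tail'' availability is exactly what permits condition~(ii) to be phrased with a single sequence $(E_n)$ and only the two estimates on $\varphi_n,\tilde\varphi_n$, dispensing with both the splitting $E_n=E_n^+\cup E_n^-$ and the $2n$-estimates that appear in Theorem~\ref{T1}.

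For (i)$\Rightarrow$(ii), I would fix a compact $K$ with $\lambda(K)>0$ and $\epsilon\in(0,1)$, and apply mixing to $U=N(\chi_K,\epsilon^2)$, $V=N(-\chi_K,\epsilon^2)$ to obtain, for every $n\geq N$, a function $f_n$ with $\|f_n-\chi_K\|_\phi<\epsilon^2$ and $\|C_nf_n+\chi_K\|_\phi<\epsilon^2$; passing to the real part and then the positive part (continuous maps commuting with $T_{g,w}$ and $S_{g,w}$) I may take $f_n\geq0$. With $E_n=\{x\in K:|f_n(x)-1|<\epsilon\}\cap\{x\in K:|C_nf_n(x)+1|<\epsilon\}$, the first half of the necessity computation in Theorem~\ref{T1} applies verbatim for each $n\geq N$ and gives $\sup_{\nu\in\Omega}\int_{K\setminus E_n}|\nu|\,d\lambda<2\epsilon$, together with $\sup_{\nu\in\Omega}\int_{E_n}\varphi_n|\nu(xg^n)|\,d\lambda<\tfrac{2\epsilon^2}{1-\epsilon}$ and the same bound with $\tilde\varphi_n$. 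Since $\epsilon$ is arbitrary and these hold for all large $n$, the three limits in (ii) follow; the $2n$-estimates are not part of (ii) and need not be derived.

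For (ii)$\Rightarrow$(i), I would fix non-empty open $U,V$, use $\Delta_2$-regularity to choose $f\in U\cap C_c(G)$ and $h\in V\cap C_c(G)$, and set $K=\mathrm{supp}(f)\cup\mathrm{supp}(h)$ with $(E_n)$ as in (ii). The key manoeuvre is to manufacture doubled-time control from the two given estimates by replacing $E_n$ with $\tilde E_n:=E_n\cap E_{2n}$. As the limits in (ii) run over the whole sequence, the subsequence $(E_{2n})$ inherits them; subadditivity gives $\sup_{\nu\in\Omega}\int_{K\setminus\tilde E_n}|\nu|\,d\lambda\to0$, while monotonicity over $\tilde E_n\subseteq E_n$ and $\tilde E_n\subseteq E_{2n}$ yields simultaneously that $\sup_{\nu\in\Omega}\int_{\tilde E_n}\varphi_n|\nu(xg^n)|\,d\lambda$, $\sup_{\nu\in\Omega}\int_{\tilde E_n}\tilde\varphi_n|\nu(xg^n)|\,d\lambda$ and $\sup_{\nu\in\Omega}\int_{\tilde E_n}\varphi_{2n}|\nu(xg^{2n})|\,d\lambda$ all tend to $0$. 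I then set $v_n:=f\chi_{\tilde E_n}+2T_{g,w}^n(h\chi_{\tilde E_n})$; using $S_{g,w}^nT_{g,w}^n=\mathrm{Id}$ one computes $C_nv_n=h\chi_{\tilde E_n}+\tfrac12T_{g,w}^n(f\chi_{\tilde E_n})+\tfrac12S_{g,w}^n(f\chi_{\tilde E_n})+T_{g,w}^{2n}(h\chi_{\tilde E_n})$. Thus $v_n-f$ and $C_nv_n-h$ consist, besides $f\chi_{K\setminus\tilde E_n}$ and $h\chi_{K\setminus\tilde E_n}$, precisely of terms of the form $T_{g,w}^n(\cdot\,\chi_{\tilde E_n})$, $S_{g,w}^n(\cdot\,\chi_{\tilde E_n})$ and $T_{g,w}^{2n}(\cdot\,\chi_{\tilde E_n})$, all controlled above; pulling out $\|f\|_\infty,\|h\|_\infty$ and using right-invariance of $\lambda$ (together with $K\cap Kg^{\pm n}=K\cap Kg^{\pm 2n}=\emptyset$ for $n$ large, from aperiodicity) as in Theorem~\ref{T1}, I conclude $\|v_n-f\|_\phi\to0$ and $\|C_nv_n-h\|_\phi\to0$, so $C_n(U)\cap V\neq\emptyset$ for all large $n$, i.e. $(C_n)$ is mixing.

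The main obstacle is the cosine-specific production of the doubled-time term $T_{g,w}^{2n}(h\chi_{\tilde E_n})$ when $\tfrac12(T_{g,w}^n+S_{g,w}^n)$ meets $2T_{g,w}^n(h\chi_{\tilde E_n})$: this cannot be bounded by the $n$-estimates alone. In Theorem~\ref{T1} it was defused by the $E^+/E^-$ splitting and the additional $2n$-estimates, which were forced there because the data were available only along a subsequence. For mixing the same difficulty dissolves: the full-sequence limits allow one to intersect $E_n$ with $E_{2n}$ and recover the doubled-time bound at no extra cost, so the argument is in fact shorter than that of Theorem~\ref{T1}, and this is the one place where the two proofs genuinely diverge.
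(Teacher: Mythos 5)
Your proof is correct, and it is worth noting that the paper offers no proof at all for this corollary: it merely asserts that the argument is ``similar'' to that of Theorem \ref{T1}. Your write-up identifies and repairs the one point where a verbatim imitation would actually break. In Theorem \ref{T1} the data are available only along a subsequence $(n_k)$, which is why the splitting $E_k=E_k^+\cup E_k^-$ and the separate $2n_k$-hypotheses must be built into condition (ii) there; in the corollary, condition (ii) contains no doubled-time hypotheses, yet the sufficiency argument unavoidably produces the term $T_{g,w}^{2n}(h\chi_{E_n})$ when $C_n$ hits $2T_{g,w}^{n}(h\chi_{E_n})$. Your replacement $\tilde{E}_n:=E_n\cap E_{2n}$ legitimately recovers the missing control: since the limits in (ii) run over the full sequence, they hold along the subsequence $(2n)$, so $\sup_{\nu\in\Omega}\int_{E_{2n}}\varphi_{2n}(x)|\nu(xg^{2n})|\,d\lambda\to0$; monotonicity over $\tilde{E}_n\subseteq E_n$, $\tilde{E}_n\subseteq E_{2n}$ together with $K\setminus\tilde{E}_n=(K\setminus E_n)\cup(K\setminus E_{2n})$ then give all four needed estimates. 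The algebra $C_nv_n=h\chi_{\tilde{E}_n}+\tfrac12T_{g,w}^{n}(f\chi_{\tilde{E}_n})+\tfrac12S_{g,w}^{n}(f\chi_{\tilde{E}_n})+T_{g,w}^{2n}(h\chi_{\tilde{E}_n})$ is right (via $S_{g,w}^{n}T_{g,w}^{n}=\mathrm{Id}$), and dropping the $S_{g,w}^{n}(h\,\cdot)$ component of $v_n$ means only $\varphi_{2n}$-control is ever needed, a genuine simplification over the $E^{\pm}$ bookkeeping of Theorem \ref{T1}. This intersection trick is exactly the observation that justifies the paper's stating (ii) without the $2n$-conditions, so your proof supplies content the paper's ``similar'' claim silently presupposes.

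One step you leave implicit in the necessity direction: mixing gives, for each fixed $\epsilon$, a threshold $N(\epsilon)$ beyond which the $\epsilon$-estimates hold, and to convert ``for every $\epsilon$, eventually'' into the stated limits one needs the routine diagonalization (take $\epsilon_j\to0$ with thresholds $N_j$, and define $E_n$ from $f_n$ and $\epsilon_j$ for $N_j\leq n<N_{j+1}$). This is the same level of detail at which the paper itself operates in Theorem \ref{T1} (``since $\epsilon$ is arbitrary\dots''), so it is an acceptable omission, but it deserves a sentence if the proof is written out in full.
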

We formulate the discrete version of Theorem \ref{T1}. If $G$ is
discrete group with the counting measure as its Haar measure. Then
the set $E_k$ is nothing but set $K$ itself. Therefore, we have the
following result.
\begin{cor} \label{2.3}
Let $g \in G$ be a non-torsion element of $G$ and let $\phi$
be a $\Delta_2$-regular Young function. Let $w, \,w^{-1} \in L^\infty(G).$
If $C_n:= \frac{1}{2}(T^n_{g,w}+S^n_{g,w})$ is cosine operator on $L^\phi(G)$
then the following statements are equivalent.
\begin{itemize}
    \item[(i)] $(C_n)_{n \in \mathbb{N}_0}$ is topological transitive.
    \item[(ii)] For each non-empty finite subset $K \subset G$, there exist
    sequences of Borel sets $(E_k^+)$ and $(E_k^-)$ in $K,$ and a sequence
    $(n_k)$ of positive numbers such that for $K=E_k^+ \cup E_k^-,$ we have the two sequence
    $$\varphi_n= \prod_{j=1}^n w*\delta_{g^{-1}}^j\,\,\,\,\,\text{and}\,\,\,\, \tilde{\varphi_n}=
    \left( \prod_{j=1}^n w*\delta_g^j \right)^{-1}$$ satisfy
    $$ \lim_{k \rightarrow \infty} \sup_{\nu \in \Omega} \sum_{x \in K}
    \varphi_{n_k}(x)|\nu(xg^{n_k})| = 0, \,\,\,\lim_{k \rightarrow \infty}
     \sup_{\nu \in \Omega} \sum_{x \in K}
    \tilde{\varphi}_{n_k}(x)|\nu(xg^{n_k})| =0,$$
    $$\lim_{k \rightarrow \infty} \sup_{\nu \in \Omega} \sum_{x \in E_k^+}
    \varphi_{2n_k}(x)|\nu(xg^{2n_k})|=0,\,\,\text{and}\,\, \lim_{k \rightarrow \infty}
     \sup_{\nu \in \Omega} \sum_{ x \in E_k^-}
    \tilde{\varphi}_{2n_k}(x)|\nu(xg^{2n_k})| =0.$$
\end{itemize}
\end{cor}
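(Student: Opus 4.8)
The plan is to obtain Corollary \ref{2.3} as a direct specialization of Theorem \ref{T1} to the discrete setting, so that the bulk of the work is bookkeeping together with one genuinely new observation. First I would record the dictionary between the two settings: when $G$ is discrete its Haar measure is the counting measure, so every integral $\int_K \cdots \, d\lambda$ becomes a finite sum $\sum_{x \in K} \cdots$; a subset of $G$ is compact precisely when it is finite, so ``non-empty compact subset $K$ with $\lambda(K)>0$'' is exactly ``non-empty finite subset $K$''; and a non-torsion element $g$ is exactly an aperiodic element, since here $G(g)$ is simply the abstract subgroup $\langle g\rangle$, which is compact iff finite iff $g$ has finite order. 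Thus the hypotheses of Theorem \ref{T1} hold verbatim, and it remains to see that condition (ii) there collapses to condition (ii) of the corollary.

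The new point is the claim in the remark preceding the statement, that the sets $E_k$ may be taken equal to $K$. I would justify it as follows. Fix a non-empty finite $S \subset G$ and a point $x_0 \in S$. Since $\psi$ is a Young's function with $\psi(0)=0$ that is finite near the origin, there is a constant $c_0>0$ with $\psi(c_0)\le 1$, so $c_0 \chi_{\{x_0\}}$ lies in $\Omega$ and hence
\[
\sup_{\nu \in \Omega} \sum_{x \in S} |\nu(x)| \;\ge\; \sup_{\nu \in \Omega} |\nu(x_0)| \;\ge\; c_0 \;>\; 0 .
\]
Consequently, for the first limit of Theorem \ref{T1}(ii),
\[
\lim_{k \to \infty} \sup_{\nu \in \Omega} \sum_{x \in K \setminus E_k} |\nu(x)| = 0,
\]
to hold it is necessary that $K \setminus E_k = \emptyset$ for all large $k$, i.e. $E_k = K$ eventually. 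As discarding (or trivially modifying) finitely many indices affects none of the remaining limits, in the discrete case we lose nothing by taking $E_k=K$ throughout, and then $E_k = E_k^+ \cup E_k^-$ becomes $K = E_k^+ \cup E_k^-$, exactly as in the corollary.

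With this reduction in hand both implications are immediate. For (i) $\Rightarrow$ (ii), Theorem \ref{T1} supplies sets $(E_k),(E_k^+),(E_k^-)$ and indices $(n_k)$ meeting its condition; replacing $E_k$ by $K$ as above and rewriting each integral as a sum, the first limit becomes vacuous (an empty sum) while the remaining four are precisely the four sum conditions of the corollary. For (ii) $\Rightarrow$ (i), given $(E_k^+),(E_k^-)$ with $K = E_k^+ \cup E_k^-$ satisfying the four sum conditions, set $E_k := K$; then $K \setminus E_k = \emptyset$ makes the first condition of Theorem \ref{T1}(ii) trivially hold, and the four sum conditions reinterpret as the four integral conditions there, so Theorem \ref{T1} yields that $(C_n)_{n \in \mathbb{N}_0}$ is topological transitive.

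The only step requiring care, and the one I would single out as the main (if modest) obstacle, is the lower bound $\sup_{\nu \in \Omega} \sum_{x \in S} |\nu(x)| \ge c_0 > 0$ for non-empty finite $S$, since it is what forces $E_k = K$ and thereby legitimizes dropping the first condition of Theorem \ref{T1}; verifying it amounts to checking that $\Omega$ contains a non-trivial point mass, which uses only that the complementary function $\psi$ is finite in a neighborhood of the origin.
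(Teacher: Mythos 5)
Your proposal is correct and follows essentially the same route as the paper, which gives no separate proof of Corollary \ref{2.3} beyond the remark that in the discrete case the Haar measure is counting measure, compact means finite, and ``the set $E_k$ is nothing but the set $K$ itself.'' Your point-mass argument --- that $c_0\chi_{\{x_0\}}\in\Omega$ for some $c_0>0$ with $\psi(c_0)\le 1$ forces $\sup_{\nu\in\Omega}\sum_{x\in S}|\nu(x)|\ge c_0$ for every non-empty $S$, hence $E_k=K$ eventually --- supplies the justification the paper leaves implicit, as does your observation that non-torsion and aperiodic coincide for discrete groups.
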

Here we present a characterization of topological trasitivity for a
finite sequence of weighted cosine operators. We set the following
notations for the sequence.
 For a fix $M \in \mathbb{N}.$ Let $\{g_l\}_{1 \leq l \leq M}$ and $\{w_l\}_{1 \leq l \leq M}$
  be the sequences of aperiodic elements of group $G$ and positive weight respectively.
   Then $\{T_{g_l, w_l}\}_{1 \leq l \leq M}$ is a sequence of weighted translation operators.
    We have the following characterization.
\begin{thm} \label{2.4} Let $\{g_l\}_l$ and $\{w_l\}_l$ be the sequences
 of aperiodic elements and positive weights respectively such that $w_l, \,w_l^{-1} \in L^\infty(G).$ Let
$C_{l,n}:=\frac{1}{2}(T^n_{g_l,w_l}+S^n_{g_l,w_l})$ be the cosine
operators on $L^\phi(G)$ for $1\leq l \leq M$, where $T_{g_l,w_l}$ is
the weighted translation operator.  Then the following statements
are equivalent.
    \begin{itemize}
        \item[(i)] $(C_{1,n}\oplus C_{2,n}\oplus \cdot\cdot\cdot \oplus C_{M,n})_{n \in \mathbb{N}_0}$
         is topologically transitive.
        \item[(ii)] For each non-empty compact subset $K \subset G$ with $\lambda(K)>0,$
there is some sequence $(n_k)$ of positive integers such that for
$1\leq l \leq M$,
        there exist sequences of Borel sets $(E_{l,k})$, $(E^+_{l,k})$ and $(E^{-}_{l,k})$ such
        that for $E_{l,k}=E^+_{l,k} \cup E^{-}_{l,k}$, we have
        $$\lim_{k \rightarrow \infty} \sup_{\nu \in \Omega} \int_{K \backslash E_{l,k}}
         |\nu(x)|\, d\lambda(x)=0$$
         and the two sequence
        $$\varphi_{l,n_k}= \prod_{j=1}^{n_k} w_l*\delta_{g_l^{-1}}^j\,\,\,\,\,\text{and}
        \,\,\,\, \tilde{\varphi}_{l,n_k}= \left( \prod_{j=1}^{n_k} w_l*\delta_{g_l}^j \right)^{-1}$$ satisfy
        $$ \lim_{k \rightarrow \infty} \sup_{\nu \in \Omega} \int_{E_{l, k}}
        \varphi_{l,n_k}(x)|\nu(xg^{n_k})|\, d\lambda(x)=0, $$
        $$ \lim_{k \rightarrow \infty} \sup_{\nu \in \Omega} \int_{E_{l,k}}
        \tilde{\varphi}_{l,n_k}(x)|\nu(xg^{n_k})|\, d\lambda(x)=0,$$
        $$ \lim_{k \rightarrow \infty} \sup_{\nu \in \Omega} \int_{E^+_{l,k}}
         \varphi_{l,2n_k}(x)|\nu(xg^{2n_k})|\, d\lambda(x)=0, $$
        $$ \lim_{k \rightarrow \infty} \sup_{\nu \in \Omega} \int_{E^{-}_{l, k}}
         \tilde{\varphi}_{l,2n_k}(x)|\nu(xg^{2n_k})|\, d\lambda(x)=0.$$
         \end{itemize}
    \end{thm}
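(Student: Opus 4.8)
The plan is to reduce the statement to $M$ parallel applications of Theorem \ref{T1}, the only genuinely new ingredient being that a single sequence $(n_k)$ must serve all $M$ coordinates at once. The space $L^\phi(G)^{\oplus M}$ carries the product topology, so a basis of open sets is given by finite products $U_1 \times \cdots \times U_M$ of open sets of $L^\phi(G)$, and norm convergence of a tuple is equivalent to coordinatewise norm convergence. I would use these two facts throughout, treating each direct summand exactly as in the proof of Theorem \ref{T1}.

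For (i) $\Rightarrow$ (ii): fix a compact $K$ with $\lambda(K)>0$ and a null sequence $\epsilon_k \downarrow 0$ in $(0,1)$. For each $k$ apply topological transitivity of the direct sum to the product neighbourhoods $U = \prod_{l=1}^M N(\chi_K, \epsilon_k^2)$ and $V = \prod_{l=1}^M N(-\chi_K, \epsilon_k^2)$. This yields a single index $n_k$ and a tuple $(f_1, \ldots, f_M)$ with $\|f_l - \chi_K\|_\phi < \epsilon_k^2$ and $\|C_{l,n_k} f_l + \chi_K\|_\phi < \epsilon_k^2$ for every $l$ simultaneously. Now, for each fixed $l$, I would run verbatim the one-operator computation from the proof of Theorem \ref{T1} with $f := f_l$, $m := n_k$, $w := w_l$, $g := g_l$: pass to the positive part (using that $\mathrm{Re}$ and $f \mapsto f^+$ are continuous and commute with $T_{g_l,w_l}$ and $S_{g_l,w_l}$), define $E_{l,k}$ and the splitting $E_{l,k}^+ \cup E_{l,k}^-$ through the same super- and sublevel sets, and obtain the four asymptotic estimates. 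Because $n_k$ was produced once and shared by all coordinates, the resulting sets together with the common sequence $(n_k)$ satisfy all of (ii).

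For (ii) $\Rightarrow$ (i): given non-empty open $U, V \subseteq L^\phi(G)^{\oplus M}$, use that $\Delta_2$-regularity makes $C_c(G)$ dense to pick $f = (f_1, \ldots, f_M) \in U$ and $h = (h_1, \ldots, h_M) \in V$ with each component compactly supported. Put $K = \bigcup_{l=1}^M (\spt f_l \cup \spt h_l)$, a single compact set, and feed it to (ii) to obtain the common $(n_k)$ and the sets $(E_{l,k})$, $(E_{l,k}^\pm)$. Since each $g_l$ is aperiodic, choose $M_l$ with $K \cap K g_l^{\pm n} = \emptyset$ for $n > M_l$ and set $M_0 = \max_l M_l$, so that for $n_k > M_0$ the disjointness exploited in Theorem \ref{T1} holds in every coordinate. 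In coordinate $l$ I would form exactly the mixing vector of Theorem \ref{T1}, namely $v_{l,k} = f_l \chi_{E_{l,k}} + 2 T_{g_l,w_l}^{n_k}(h_l \chi_{E_{l,k}^+}) + 2 S_{g_l,w_l}^{n_k}(h_l \chi_{E_{l,k}^-})$, and the same Minkowski-inequality estimates give $\|v_{l,k} - f_l\|_\phi \to 0$ and $\|C_{l,n_k} v_{l,k} - h_l\|_\phi \to 0$. Setting $v_k = (v_{1,k}, \ldots, v_{M,k})$, coordinatewise convergence yields $v_k \to f$ and $(C_{1,n_k} \oplus \cdots \oplus C_{M,n_k}) v_k \to h$, so the direct sum is topological transitive.

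The only real obstacle is synchronisation: everything hinges on extracting one sequence $(n_k)$ that works across all $M$ factors. In the forward direction this is automatic, since the definition of topological transitivity of a direct sum returns a single $n$ for each pair of product open sets; in the reverse direction it is supplied by hypothesis (ii), which explicitly quantifies the sets for $1 \le l \le M$ over a common $(n_k)$. Once this common sequence is in hand, each coordinate is handled by an unchanged copy of the argument for Theorem \ref{T1}, so beyond the bookkeeping with $M_0 = \max_l M_l$ and the coordinatewise characterisation of convergence, no new analytic estimate is required.
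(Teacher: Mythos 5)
Your proposal is correct and follows essentially the same route as the paper, which likewise reduces the statement to $M$ parallel runs of the proof of Theorem \ref{T1} with a shared sequence $(n_k)$: in the forward direction transitivity of the direct sum applied to product neighbourhoods returns one common index, and in the reverse direction hypothesis (ii) supplies the common $(n_k)$. If anything, you are more careful than the paper at one point: where it writes $K=\spt(f_l)\cup \spt(h_l)$, you correctly take $K=\bigcup_{l=1}^{M}(\spt f_l \cup \spt h_l)$ so that a single compact set (and hence a single sequence $(n_k)$) serves all coordinates simultaneously, which is exactly what the direct-sum conclusion requires.
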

    \begin{proof} (i)$\Rightarrow$(ii). Let $K$ be a compact subset of $G$ such that
    $\lambda(K)>0.$ Since $(C_{1,n}\oplus C_{2,n}\oplus \cdot\cdot\cdot
    \oplus C_{N,n})_{n \in \mathbb{M}_0}$ is topological transitive,
    for $\epsilon \in (0,1),$ there exist $f_l \in L^\phi(G)$ and $m \in N$ such that for
     $1\leq l \leq M,$ we have
        $$\| f_l- \chi_K\|_\phi < \epsilon^2\,\,\,\,\,
        \text{and}\,\,\,\, \|C_{l,m}f_l+\chi_K\|_\phi< \epsilon^2.$$

        Further, to complete the proof follow the proof of part (i) $\Rightarrow$ (ii)
        of Theorem \ref{T1} to get desired conditions on weights $w_l$ for each $l.$

        (ii) $\Rightarrow$ (i). Let $U_l$ and $V_l$ be non-empty open subsets of $L^\phi(G).$
         Since $\phi$ is $\Delta_2$-regular we can choose two non-zero
         functions $f_l$ and $h_l$ in $C_c(G)$ such that $f_l \in U_l$
          and $h \in V_l.$ Set $K=\spt(f_l) \cup \spt(h_l).$ Let $E_{l,k}
           \subset K$ and it satisfies condition (ii).
            Now, imitate the proof of (ii) $\Rightarrow$ (i) of Theorem \ref{T1}
            to get that $C_{l,n_k}(U_l) \cap V_l \neq \emptyset$ for each $l, \, 1\leq l \leq M.$
         \end{proof}

    \begin{exmp}
    Let $G=\mathbb{Z}$. Fix an aperiodic element $g\in \mathbb{Z}$
    with $g\geq 1$.
     Define the Young's function $\phi(x)=(1+|x|)\ln(1+|x|)-|x|$,
    and consider the weight function $$w(i)=\left\{
  \begin{array}{ll}
    \frac{1}{2}, & {i\geq 0}, \\
        \frac{3}{2}, & {i<0}.
          \end{array}
\right.$$ A direct computation needs to
    find the complementary of Young's function $\phi$, that is
    $\psi(x)=\exp({|x|})-|x|-1$.
 Note that $\phi$ is $\Delta_2$-regular and vanishes only
at zero.
 Choose an arbitrary $\nu \in
\Omega$. Then, $\sum_{n=-\infty}^{+\infty}\psi(|\nu(n)|)\leq 1$ if
and only if
$\sum_{n=-\infty}^{+\infty}(\exp(|\nu(n)|)-|\nu(n)|-1)\leq 1$. But
the last is established only if $|\nu|\leq 2$. Each compact subset
$K\subset \mathbb{Z}$ is a finite set, consisting of the integers
$a_1\leq a_2\leq \cdot\cdot\cdot \leq a_m$. Take
$E_i:=\{a_1,\cdot\cdot\cdot,a_i\}$ and for each $i\geq m$, define
$E_i:=E_m$. Put $n_k=i$, $E_k^+:=E_i$ and $E_k^-:=\emptyset$ in the
statement (ii) of Theorem \ref{T1}. In this circumstances, we have
$$\lim_{n\rightarrow \infty}\sup_{\nu \in \Omega}\sum_{j\in K\setminus E_n}|\nu(j)|=0.$$
In addition,  one may find $n_0\in \mathbb{N}$ such that
$a_1+n_0g\geq 0$. Hence, for each $n\geq n_0$ we have,
\begin{eqnarray*}
& \sup_{\nu \in \Omega}\sum_{i\in E_n}\varphi_n(i)|\nu(i+ng)| \leq
2\sum_{i\in E_n}\varphi_n(i)\\
&= 2\sum_{i\in
E_n}w(i+g)w(i+2g)\cdot\cdot\cdot w(i+ng)\\
&\leq 2\sum_{i\in E_n}w(a_1+g)w(a_1+2g)\cdot\cdot\cdot\\
& w(a_1+n_0
g)w(a_1+(n_0+1)g)\cdot\cdot\cdot w(a_1+n g)\\
&\leq 2\sum_{i\in
E_n}(\frac{1}{2})^{n-n_0}w(a_1)^{|a_1|}\\
&= 2card(E_n)(\frac{1}{2})^{n-n_0}w(a_1)^{|a_1|}\\
& \leq 2m(\frac{1}{2})^{n-n_0}w(a_1)^{|a_1|}\rightarrow 0,
\end{eqnarray*}
as $n\rightarrow \infty$. Here, $card(E_n)$ means the cardinality of
the set $E_n$.\\ Similarly, there exists $t_0\in \mathbb{N}$ such
that $a_m-t_0g\leq 0$ and so for each $n\geq t_0$,
\begin{eqnarray*}
& \sup_{\nu \in \Omega}\sum_{i\in E_n}\tilde{\varphi_n}(i)|\nu(i+n
g)| \leq
2\sum_{i\in E_n}\tilde{\varphi_n}(i)\\
&= 2\sum_{i\in
E_n}w^{-1}(i-g)w^{-1}(i-2g)\cdot\cdot\cdot w^{-1}(i-n g)\\
&\leq 2\sum_{i\in E_n}w^{-1}(a_m-g)w^{-1}(a_m-2g)\cdot\cdot\cdot\\
& w^{-1}(a_m-t_0 g)w^{-1}(a_m-(t_0+1)g)\cdot\cdot\cdot w^{-1}(a_m-n g)\\
&\leq 2\sum_{i\in
E_n}(\frac{2}{3})^{n-t_0}w^{-1}(a_m)^{|a_m|}\\
& \leq 2m(\frac{2}{3})^{n-t_0}w^{-1}(a_m)^{|a_m|}\rightarrow 0,
\end{eqnarray*}
as $n\rightarrow \infty$.  The other statements of Theorem \ref{T1}
can be verified in this way. Therefore, by Theorem \ref{T1}, the
corresponding sequence of  cosine operators to the weight $w$ and
$g$, is topological transitive.
    \end{exmp}

\section*{Acknowledgment}
Vishvesh Kumar thanks the Council of Scientific and Industrial Research, India, for its senior
 research fellowship.

\nocite{*}


\begin{thebibliography}{99}


\bibitem{ans} S. I. Ansari,  Hypercyclic and cyclic vectors,
 J. Funct. Anal. 128 (1995), no. 2, 374-383.

 \bibitem{az} M. R. Azimi, I. Akbarbaglu, Hypercyclicity of weighted translations on Orlicz spaces,
  Oper. Matrices 12 (2018), no. 1, 27-37.

\bibitem{bay} F. Bayart, {\'E}. Matheron, Dynamics of linear operators, Cambridge Tracts in
Mathematics, Cambridge University Press, Cambridge, 2009.

\bibitem{bon} A. Bonilla, P. Miana, Hypercyclic and topologically mixing cosine
functions on Banach spaces, Proc. Am. Math. Soc. 136 (2008) 519–528.

\bibitem{chen3} C. Chen, Topological transitivity for cosine operator functions on
groups, Topology Appl. 191 (2015), 48-57.

\bibitem{chen} C. Chen, C-H.  Chu, Hypercyclic weighted translations on groups,
 Proc. Amer. Math. Soc. 139 (2011), no. 8, 2839-2846.

\bibitem{chen2}  C. Chen, C-H.  Chu, Hypercyclicity of weighted convolution
 operators on homogeneous spaces,  Proc. Amer. Math. Soc. 137 (2009), no. 8, 2709-2718.

\bibitem{mang} K.-G. Grosse-Erdmann, A.P. Manguillot,
 Linear chaos, Universitext, Springer, London, 2011.

\bibitem{kal} T. Kalmes, Hypercyclicity and mixing for cosine operator
 functions generated by second order partial differential operators,
J. Math. Anal. Appl. 365 (2010) 363–375.

\bibitem{kit} C. Kitai, Invariant closed sets for linear opeartors,
 Thesis (Ph.D.)–University of Toronto (Canada), 1982.

\bibitem{rao} M. M. Rao, Z. D. Ren,  Theory of Orlicz spaces,
 Monographs and Textbooks in Pure and Applied Mathematics,
 146. Marcel Dekker, Inc., New York, 1991.

 \bibitem{sal} H. N. Salas, Hypercyclic weighted shifts, Trans. Amer. Math.
Soc., {\bf 347} (3) (1995) 993-1004.




\end{thebibliography}
\end{document}